\newcommand{\asinh}{{\mbox{arcsinh}}}
\theoremstyle{theorem}
\newtheorem{thm}{Theorem}[section]
\theoremstyle{corollary}
\newtheorem{coro}{Corollary}[section]
\theoremstyle{lemma}
\newtheorem{lemma}{Lemma}[section]
\theoremstyle{definition}
\newtheorem{defi}{Definition}[section]
\theoremstyle{remark}
\newtheorem{rem}{Remark}[section]
\begin{document}

\title{Davies-Gaffney-Grigor'yan Lemma on Simplicial complexes}
\author{Bobo Hua}

\address{Bobo Hua,School of Mathematical Sciences, LMNS, Fudan University, Shanghai 200433, China; Shanghai Center for Mathematical Science, Fudan University, Shanghai 200433, China. }
\email{bobohua@fudan.edu.cn}

\author{Xin Luo}
\address{Xin Luo, College of Mathematics and Econometrics, Hunan University,Changsha 410082,China}
\email{xinluo@hnu.edu.cn}

\begin{abstract}
We prove Davies-Gaffney-Grigor'yan lemma for heat kernels of bounded discrete Hodge Laplacians on
simplicial complexes.

\end{abstract}
\maketitle

\section{introduction}
The Davies-Gaffney-Grigor'yan Lemma, denoted as DGG Lemma in short below, is useful for heat kernel estimates on both manifolds and graphs. The DGG lemma on manifolds can be described as follows

\begin{lemma}[Davies-Gaffney-Grigor'yan]\label{DGGManifold}
Let $M$ be a complete Riemannian manifold and $p_t(x,y)$ the minimal heat kernel on $M$. For any two measurable  subsets $B_1$ and $B_2$ of $M$ and $t>0,$ we have
\begin{equation}\label{e:DGG Riemannian}\int_{B_1}\int_{B_2} p_t(x,y)d\mathrm{vol}(x)d\mathrm{vol}(y) \leq \sqrt{\mathrm{vol}(B_1)\mathrm{vol}(B_2)}\exp\left(-\mu t\right)\exp\left(- \frac{d^2(B_1,B_2)}{4t}\right),\end{equation}
where $\mu$ is the greatest lower bound of the $\ell^2$-spectrum of the Laplacian on $M$ and $d(B_1,B_2)=\inf_{x_1\in B_1,x_2\in B_2}d(x_1,x_2)$ the distance between $B_1$ and $B_2$.
\end{lemma}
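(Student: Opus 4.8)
The plan is to identify the left-hand side of \eqref{e:DGG Riemannian} with the $L^2$-pairing $\langle P_t \mathbf{1}_{B_2}, \mathbf{1}_{B_1}\rangle$, where $P_t=e^{-t\Delta}$ denotes the heat semigroup generated by the nonnegative Laplacian $\Delta=-\mathrm{div}\,\nabla$, and then to establish a single weighted $L^2$-estimate for $u(\cdot,t):=P_t\mathbf{1}_{B_2}$ that produces both the spectral factor $e^{-\mu t}$ and the Gaussian factor $e^{-d^2(B_1,B_2)/4t}$ at once. The engine is a weighted energy $\int_M u^2 e^{\xi}\,d\mathrm{vol}$, monitored along the heat flow for a suitable Lipschitz weight $\xi(x,t)$. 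To isolate the gradient term cleanly I would pass to $v:=u\,e^{\xi/2}$, for which $\|v(\cdot,t)\|_{L^2}^2=\int_M u^2 e^{\xi}\,d\mathrm{vol}$; a direct computation using $\partial_t u=-\Delta u$ and integration by parts, in which the first-order terms $\int_M v\,\nabla v\cdot\nabla\xi\,d\mathrm{vol}$ cancel exactly, yields the identity
\begin{equation*}
\frac{d}{dt}\int_M v^2\,d\mathrm{vol}=-2\int_M|\nabla v|^2\,d\mathrm{vol}+\int_M v^2\Big(\tfrac12|\nabla\xi|^2+\partial_t\xi\Big)\,d\mathrm{vol}.
\end{equation*}

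Into this identity I would feed two inputs. The variational characterization of $\mu$ gives $\int_M|\nabla v|^2\,d\mathrm{vol}\ge \mu\int_M v^2\,d\mathrm{vol}$ (valid once $v$ lies in the form domain $W^{1,2}(M)$), while imposing the Hamilton--Jacobi inequality $\tfrac12|\nabla\xi|^2+\partial_t\xi\le 0$ kills the last term. Together these yield $\frac{d}{dt}\|v\|_{L^2}^2\le -2\mu\|v\|_{L^2}^2$, hence the monotonicity
\begin{equation*}
\int_M u^2(\cdot,t)\,e^{\xi}\,d\mathrm{vol}\le e^{-2\mu(t-t_0)}\int_M u^2(\cdot,t_0)\,e^{\xi}\,d\mathrm{vol},\qquad 0\le t_0\le t.
\end{equation*}

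I would then choose $\xi(x,t)=\rho(x)^2/\big(2(t+\tau)\big)$ with $\rho(x):=d(x,B_2)$ and a parameter $\tau>0$. Since $\rho$ is $1$-Lipschitz one has $|\nabla\xi|^2\le \rho^2/(t+\tau)^2$ and $\partial_t\xi=-\rho^2/\big(2(t+\tau)^2\big)$, so the Hamilton--Jacobi inequality holds; moreover $\xi$ is bounded for $\tau>0$, which secures $v\in W^{1,2}(M)$. Taking $t_0=0$ and using $u(\cdot,0)=\mathbf{1}_{B_2}$ together with $\rho\equiv 0$ on $B_2$ gives $\int_M u^2(\cdot,0)e^{\xi}\,d\mathrm{vol}=\mathrm{vol}(B_2)$. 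Restricting the left-hand side to $B_1$, where $\rho\ge d(B_1,B_2)=:d$, and letting $\tau\to 0^+$ produces
\begin{equation*}
\int_{B_1}u^2(\cdot,t)\,d\mathrm{vol}\le e^{-2\mu t}\,e^{-d^2/2t}\,\mathrm{vol}(B_2).
\end{equation*}
A final Cauchy--Schwarz inequality, $\int_{B_1}u\,d\mathrm{vol}\le \sqrt{\mathrm{vol}(B_1)}\big(\int_{B_1}u^2\,d\mathrm{vol}\big)^{1/2}$, halves both exponents and delivers \eqref{e:DGG Riemannian}.

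The main obstacle is analytic rather than algebraic: making the energy identity rigorous. One must justify differentiating under the integral sign and, crucially, integrating by parts with no boundary contribution at infinity --- this is exactly where completeness of $M$ enters, via an exhaustion by cutoffs together with the $L^2$-decay of $P_t\mathbf{1}_{B_2}$. One also needs $v=u\,e^{\xi/2}\in W^{1,2}(M)$ so that the spectral bound $\int|\nabla v|^2\ge\mu\int v^2$ applies; this is why I keep $\tau>0$ (so that $\xi$ is a bounded Lipschitz function) and only pass to the limit $\tau\to0^+$ at the very end, in the already-integrated inequality.
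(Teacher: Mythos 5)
First, a point of comparison: the paper never proves Lemma \ref{DGGManifold} at all --- it is quoted as classical background (Davies \cite{7}, Gaffney \cite{14}, Grigor'yan \cite{15}, Li--Yau \cite{23}), and the paper's actual work is the discrete analogue on simplicial complexes, proved via the integral maximum principle (Lemma \ref{l:monotonicity Delmotte}) followed by optimization of the weight. So your attempt can only be measured against the paper's \emph{method}, and in that sense you have reproduced exactly the right template: your argument is Grigor'yan's integral-maximum-principle proof. The weighted energy identity for $v=u e^{\xi/2}$ is correct (the cross terms $\int v\,\nabla v\cdot\nabla\xi$ do cancel), the spectral bound and the Hamilton--Jacobi condition are used in the right places, and the final Cauchy--Schwarz correctly halves both exponents to produce $e^{-\mu t}e^{-d^2/4t}$. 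The structural difference from the paper is forced by the setting, not by strategy: in the continuum you may take the exact Hamilton--Jacobi weight $\rho^2/(2(t+\tau))$ and obtain the sharp Gaussian factor, whereas on a complex (or graph) the discreteness produces the $\cosh$ correction in Lemma \ref{l:monotonicity Delmotte}, which forces linear-in-$\rho$ weights and an optimization over the Lipschitz constant $\kappa$, yielding $\zeta_s(t,r)$ in Theorem \ref{sharps} instead of $r^2/4t$.

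There is, however, one concrete error in your justification: the claim that ``$\xi$ is bounded for $\tau>0$'' is false. Since $\rho(x)=d(x,B_2)$ is unbounded whenever $M$ is noncompact, $e^{\xi}$ blows up at spatial infinity, so neither the a priori finiteness of $\int_M u^2 e^{\xi}\,d\mathrm{vol}$ nor the membership $v\in W^{1,2}(M)$ is secured by what you wrote --- and this finiteness is precisely the delicate point in Grigor'yan's argument, not a formality. The standard repair is a truncation, and it is exactly what the paper does in its own discrete proof, where the weight is the capped function $\zeta(F)=\kappa\rho(F,A)\wedge(\kappa\rho(A,B)+1)$: replace $\rho$ by $\rho\wedge D$ with $D:=d(B_1,B_2)$, i.e.\ take
\begin{equation*}
\xi(x,t)=\frac{\bigl(\rho(x)\wedge D\bigr)^2}{2(t+\tau)}.
\end{equation*}
This weight is bounded and Lipschitz; it still satisfies $\tfrac12|\nabla\xi|^2+\partial_t\xi\le 0$ almost everywhere (where $\rho<D$ the computation is yours, where $\rho>D$ the gradient vanishes and $\partial_t\xi\le 0$); it still vanishes on $B_2$ at $t=0$; and it still equals $D^2/\bigl(2(t+\tau)\bigr)$ on $B_1$. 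With this one-line change (and the harmless reduction to $\mathrm{vol}(B_2)<\infty$, since otherwise the right-hand side of \eqref{e:DGG Riemannian} is infinite), your proof is correct; the remaining analytic care you flag --- integration by parts without boundary terms on a complete manifold --- is indeed where completeness enters, as you say.
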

Davies firstly proved a lemma of this type in \cite{7} by adpoting the argument of Gaffney \cite{14}. Li and Yau also proved an earlier version of this lemma in \cite{23}. Later Grigor'yan proved the lemma in \cite{15} and introduced the term $\exp(-\mu t)$ on the right hand side which gives the sharp speed of decay of
the heat kernel as $t\rightarrow \infty$ when $\mu > 0.$

Recently, variants of the Davies-Gaffney-Grigor'yan Lemma were proved by Bauer, Hua and Yau \cite{2,3} on graphs.

Weighted graphs are defined to be the set of vertices $V$ and edges $E=\{(x,y)|x, y \in V\}$ with
a measure function $m:V\in x\mapsto m_{x}\in (0,\infty)$ and an edge weight function $\mu:E\mapsto\mu_{xy}\in[0,\infty).$ The measure of any vertices subset $A$ is defined as the
sum of measures of vertices, i.e. $m(A)=\sum_{x\in A}m_{x}$. Intrinsic metrics on graphs were introduced by Frank, Lenz and Wingert in \cite{13} and a pseudo metric $\rho$ is called intrinsic if
$\sum_{y\in V}\mu_{xy}\rho^{2}(x,y)\leq m_{x}.$ The quantity, $s:=sup\{\rho(x,y)|x,y\in V, \mu_{xy}>0\},$ where the supremum is taken over all pairs $(x,y)$ with $\mu_{xy}>0$ is the jump size of an intrinsic metric $\rho.$

According to \cite{2,3},
\begin{thm}
Let $(V,\mu,m)$ be a weighted graph with an intrinsic metric $\rho$ with finite jump size $s>0.$ Let $A,B$ be two subsets in $V$ and $f,g\in \ell^2_m$ with $\mathrm{supp} f\subset A, \mathrm{supp} g\subset B,$ then
$$
|\langle e^{t\Delta}f,g\rangle|\leq e^{-\lambda t-\zeta_s(t,\rho(A,B))}\|f\|_{\ell^2_m}\|g\|_{\ell^2_m}
$$
where $\lambda$ is the bottom of the $\ell^2$-spectrum of Laplacian and
$$\zeta_s(t,r)=\frac{1}{s^2}(rs\asinh{\frac{rs}{t}}-\sqrt{t^2+r^2s^2}+t). \quad t>0,r\geq0$$
Moreover,
$$\sum_{y\in B}\sum_{x\in A}p_t(x,y)m_xm_y\leq \sqrt{m(A)m(B)}e^{-\lambda t-\zeta_s(t,\rho(A,B))}$$
where $p_t(x,y)$ is the minimal heat kernel of the graph.
\end{thm}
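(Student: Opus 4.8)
The plan is to prove the bilinear estimate by Davies' exponential-weight method and then read off the heat-kernel bound as a special case. Write $\Delta$ for the heat-flow generator, so that $u_t:=e^{t\Delta}f$ solves $\partial_t u_t=\Delta u_t$, and let $\mathcal{E}(v,w)=\frac12\sum_{x,y}\mu_{xy}(v(x)-v(y))(w(x)-w(y))$ be the associated Dirichlet form, which satisfies $\mathcal{E}(v,v)=\langle -\Delta v,v\rangle\ge\lambda\|v\|_{\ell^2_m}^2$ for all $v$ by the variational characterization of the bottom of the spectrum $\lambda$. Fix $\alpha\ge0$ and a bounded function $\psi$ that is $\alpha$-Lipschitz along edges, i.e.\ $|\psi(x)-\psi(y)|\le\alpha\rho(x,y)$ whenever $\mu_{xy}>0$. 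The central quantity is the weighted energy $I(t):=\langle e^{2\psi}u_t,u_t\rangle=\|e^{\psi}u_t\|_{\ell^2_m}^2$; differentiating and using the symmetry of $\mathcal E$ gives $I'(t)=-2\mathcal{E}(e^{2\psi}u_t,u_t)$, so the whole argument reduces to a lower bound for $\mathcal{E}(e^{2\psi}u,u)$.

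The key step is the pointwise algebraic identity: writing $P=\psi(x),Q=\psi(y),a=u(x),b=u(y)$, one has
\[(e^{2P}a-e^{2Q}b)(a-b)=(e^{P}a-e^{Q}b)^2-ab\,(e^{P}-e^{Q})^2,\]
which after summation against $\frac12\mu_{xy}$ yields $\mathcal{E}(e^{2\psi}u,u)=\mathcal{E}(e^{\psi}u,e^{\psi}u)-\frac12\sum_{x,y}\mu_{xy}u(x)u(y)(e^{\psi(x)}-e^{\psi(y)})^2$. The first term is at least $\lambda\|e^{\psi}u\|^2$ by the spectral bound. For the error term I would use $(e^{\psi(x)}-e^{\psi(y)})^2=4e^{\psi(x)+\psi(y)}\sinh^2\!\big(\tfrac{\psi(x)-\psi(y)}{2}\big)$, then $2|u(x)e^{\psi(x)}|\,|u(y)e^{\psi(y)}|\le u(x)^2e^{2\psi(x)}+u(y)^2e^{2\psi(y)}$ with symmetrization, the monotonicity of $r\mapsto\sinh^2(\alpha r/2)/r^2$ on $(0,s]$ to replace $\rho(x,y)$ by the jump size $s$, and finally the intrinsic condition $\sum_y\mu_{xy}\rho^2(x,y)\le m_x$. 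This produces the clean bound $\big|\tfrac12\sum_{x,y}\mu_{xy}u(x)u(y)(e^{\psi(x)}-e^{\psi(y)})^2\big|\le\kappa(\alpha)\,I(t)$ with $\kappa(\alpha):=\frac{\cosh(\alpha s)-1}{s^2}$, whence $I'(t)\le 2(\kappa(\alpha)-\lambda)I(t)$ and, by Gronwall, $\|e^{\psi}u_t\|\le e^{(\kappa(\alpha)-\lambda)t}\|e^{\psi}f\|$.

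It remains to choose the weight and optimize. Taking $\psi(x)=\alpha\min(\rho(x,A),\rho(A,B))$, which is bounded and $\alpha$-Lipschitz, gives $e^{\psi}\equiv1$ on $A\supset\mathrm{supp}f$ and $e^{-\psi}\equiv e^{-\alpha\rho(A,B)}$ on $B\supset\mathrm{supp}g$, so that Cauchy--Schwarz applied to $\langle u_t,g\rangle=\langle e^{\psi}u_t,e^{-\psi}g\rangle$ gives $|\langle e^{t\Delta}f,g\rangle|\le e^{-\lambda t}e^{t\kappa(\alpha)-\alpha\rho(A,B)}\|f\|_{\ell^2_m}\|g\|_{\ell^2_m}$. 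Minimizing over $\alpha\ge0$ and recognizing that $\sup_{\alpha\ge0}(\alpha r-t\kappa(\alpha))=\zeta_s(t,r)$—the optimizer $\alpha=\frac1s\asinh\frac{rs}{t}$ reproduces exactly the stated formula for $\zeta_s$—yields the first inequality. The heat-kernel bound is then the special case $f=\mathds{1}_B$, $g=\mathds{1}_A$, using $\langle e^{t\Delta}\mathds{1}_B,\mathds{1}_A\rangle=\sum_{y\in B}\sum_{x\in A}p_t(x,y)m_xm_y$, the positivity of $p_t$, and an exhaustion argument by finite subsets when $m(A)$ or $m(B)$ is infinite. I expect the main difficulty to be the error-term estimate: extracting precisely the constant $\kappa(\alpha)=(\cosh(\alpha s)-1)/s^2$ requires invoking the jump size and the intrinsic-metric inequality at exactly the right places, and it is this sharp constant that makes the subsequent Legendre transform close up to the optimal function $\zeta_s$.
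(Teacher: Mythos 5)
Your proposal is correct and follows essentially the same route as the paper: your weighted-energy functional $I(t)=\|e^{\psi}u_t\|^2$ with the splitting into $\mathcal{E}(e^{\psi}u,e^{\psi}u)$ plus an error term is exactly the paper's integral maximum principle (Lemma 3.1, with $\zeta=2\psi$, $\kappa=2\alpha$, and terms $I$, $II$), including the same sharp constant $\frac{1}{s^2}(\cosh(\alpha s)-1)$ obtained via the $\sinh$/$\cosh$ identity, symmetrization, monotonicity in the jump size, and the intrinsic-metric condition. The final steps — the truncated distance weight, Cauchy--Schwarz/duality, and the Legendre-transform optimization producing $\zeta_s(t,r)$ at $\alpha=\frac{1}{s}\asinh\frac{rs}{t}$ — likewise coincide with the paper's proof of its main theorem.
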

The DGG lemmas can be used to obtain heat kernel estimates and eigenvalue estimates. For instance,
in conjunction with the Li-Yau inequality, pointwise heat kernel estimates were obtained for mainfolds in \cite{22,23}. And Chung, Grigor'yan and Yau \cite{5} obtained eigenvalue estimates with DGG lemma on compact Riemannian mainfolds. Moreover, combining the Harnack inequality and the DGG lemma, \cite{2} proved the heat kernel estimates on graphs satisfying the exponential curvature dimension inequality and eigenvalue estimates on finite graphs.

In this paper, we prove the DGG Lemma  for discrete Hodge Laplacian on simplicial complexes. Discrete Hodge Laplacians on simplicial complexes are generlizations of Hodge Laplacian on differential forms which encode the information of topology and geometry. Horak and Jost \cite{18,19} obtained properties of spectra of (discrete) Hodge Laplacians on special simplicial complexes and effects of constructions on spectrum. In this paper, we consider bounded Hodge Laplacians and related heat kernel to prove the DGG lemma. For bounded Hodge Laplacian $\mathcal{L}_i,$ it defines the heat semigroup $\{e^{-t\mathcal{L}_i}\}_{t\geq 0}$ which is a family of bounded self-adjoint operators. Then $e^{-t\mathcal{L}_i}$ has a heat kernel. We provide one kind of appropriate weight metrics on simplicial complexes which behave like the intrinsic metrics on graphs and with this kind of  metrics we are able to prove DGG Lemma for the continuous time heat kernel related to bounded Hodge Laplacian on simplicial complexes adopting the methods in \cite{3,6}.

We state the DGG lemma on simplicial complexes as follows, for the precise definitions of the quantities used we refer to Section 2.
\begin{thm}\label{sharps}
Let $(K,w)$ be an oriented weighted simplicial complex with an intrinsic metric $\rho$ with jump size $s>0.$ Let $A,B$ be two subsets in $S_{i}(K)$ and $f,g\in \ell^2_w$ acting on $i$-simplices with $\mathrm{supp} f\subset A, \mathrm{supp} g\subset B,$ then
$$|\langle e^{-t\mathcal{L}_i}f,g\rangle|\leq e^{-\lambda t-\zeta_s(t,\rho(A,B))}\|f\|_{\ell^2_w}\|g\|_{\ell^2_w},$$
In particular,
\begin{equation}
|\sum_{F'\in B}\sum_{F\in A}p_t(F,F')w(F)w(F')|\leq \sqrt{w(A)w(B)}e^{-\lambda t-\zeta_s(t,\rho(A,B))}
\end{equation}
where $\lambda$ is the bottom of the $\ell^2$-spectrum of bounded Hodge Laplacian $\mathcal{L}_i$ and $\zeta_s(t,\rho)=\frac{1}{s^2}(\rho s\asinh{\frac{\rho s}{t}}-\sqrt{t^2+\rho^2s^2}+t).$
\end{thm}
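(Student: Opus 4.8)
The plan is to run Grigor'yan's integrated maximum principle in the discrete Hodge setting, twisting the semigroup by an exponential weight adapted to the intrinsic metric and then optimizing over the twisting parameter. Fix $A,B$, write $r=\rho(A,B)$, and choose the cut-off $\psi(F)=\min\{\rho(A,F),r\}$ on $S_i(K)$: it is bounded, vanishes on $A$, equals $r$ on $B$, and is $1$-Lipschitz, i.e. $|\psi(F)-\psi(F')|\le\rho(F,F')$. For $\alpha\ge0$ set $u(t)=e^{-t\mathcal{L}_i}f$ and $v(t)=e^{\alpha\psi}u(t)$ (pointwise multiplication). Since $\mathcal{L}_i$ is bounded and self-adjoint and $e^{\alpha\psi}$ is a bounded multiplication operator, $t\mapsto\|v(t)\|_{\ell^2_w}^2$ is differentiable and
\begin{equation*}
\frac{d}{dt}\|v(t)\|_{\ell^2_w}^2=-2\,\mathrm{Re}\,\langle e^{2\alpha\psi}\mathcal{L}_i u(t),u(t)\rangle .
\end{equation*}

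The heart of the matter is the lower bound
\begin{equation}\label{e:key}
\mathrm{Re}\,\langle e^{2\alpha\psi}\mathcal{L}_i u,u\rangle\ \ge\ \langle\mathcal{L}_i v,v\rangle-\Lambda(\alpha)\,\|v\|_{\ell^2_w}^2,\qquad \Lambda(\alpha):=\frac{\cosh(\alpha s)-1}{s^2},
\end{equation}
with $v=e^{\alpha\psi}u$. To obtain it I would write the Hodge form $\langle\mathcal{L}_i\cdot,\cdot\rangle$ as a sum over pairs $F,F'$ of up/down-adjacent $i$-simplices of jump terms $b(F,F')\,(u(F)\mp u(F'))(\overline{w(F)}\mp\overline{w(F')})$ plus a nonnegative diagonal part (the signs dictated by relative orientations). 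Applying to each pair the elementary identity $\mathrm{Re}\big[(a-b)(p^2\bar a-q^2\bar b)\big]=|pa-qb|^2-(p-q)^2\,\mathrm{Re}(a\bar b)$ with $p=e^{\alpha\psi(F)},\,q=e^{\alpha\psi(F')}$ rewrites $\mathrm{Re}\langle e^{2\alpha\psi}\mathcal{L}_i u,u\rangle$ as $\langle\mathcal{L}_i v,v\rangle$ minus the error $\sum b(F,F')(p-q)^2\mathrm{Re}(u(F)\overline{u(F')})$. Substituting $u(F)=v(F)/p$, bounding $2\,\mathrm{Re}(v(F)\overline{v(F')})\le|v(F)|^2+|v(F')|^2$, and using $(p-q)^2/(pq)=2(\cosh(\alpha(\psi(F)-\psi(F')))-1)$ together with the convexity estimate $\cosh(\alpha\rho)-1\le\frac{\cosh(\alpha s)-1}{s^2}\rho^2$ for $0\le\rho\le s$ (valid term by term in the power series), the error is controlled by $\Lambda(\alpha)\sum_F|v(F)|^2\sum_{F'}b(F,F')\rho^2(F,F')$. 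The defining inequality of the intrinsic metric from Section 2, $\sum_{F'}b(F,F')\rho^2(F,F')\le w(F)$, then bounds this by $\Lambda(\alpha)\|v\|_{\ell^2_w}^2$, giving \eqref{e:key}.

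Granting \eqref{e:key} and $\langle\mathcal{L}_i v,v\rangle\ge\lambda\|v\|_{\ell^2_w}^2$ (as $\lambda$ is the bottom of the spectrum of $\mathcal{L}_i$), the differential identity yields $\frac{d}{dt}\|v(t)\|^2_{\ell^2_w}\le 2(\Lambda(\alpha)-\lambda)\|v(t)\|^2_{\ell^2_w}$, so by Gr\"onwall and $\|v(0)\|_{\ell^2_w}=\|e^{\alpha\psi}f\|_{\ell^2_w}=\|f\|_{\ell^2_w}$ (since $\psi\equiv0$ on $\mathrm{supp}\,f\subset A$) one gets $\|v(t)\|_{\ell^2_w}\le e^{(\Lambda(\alpha)-\lambda)t}\|f\|_{\ell^2_w}$. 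Writing $\langle u(t),g\rangle=\langle v(t),e^{-\alpha\psi}g\rangle$, Cauchy--Schwarz and $\psi\equiv r$ on $\mathrm{supp}\,g\subset B$ give $|\langle e^{-t\mathcal{L}_i}f,g\rangle|\le e^{-\lambda t}\,e^{\Lambda(\alpha)t-\alpha r}\|f\|_{\ell^2_w}\|g\|_{\ell^2_w}$. Finally I would optimize over $\alpha\ge0$: the supremum $\sup_{\alpha\ge0}(\alpha r-t\Lambda(\alpha))$ is attained at $\sinh(\alpha s)=rs/t$ and equals exactly $\zeta_s(t,r)=\frac1{s^2}(rs\,\asinh\frac{rs}{t}-\sqrt{t^2+r^2s^2}+t)$, yielding the claimed bound. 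The ``in particular'' statement then follows by taking $f,g$ to be the characteristic functions of $A$ and $B$, for which $\langle e^{-t\mathcal{L}_i}f,g\rangle=\sum_{F'\in B}\sum_{F\in A}p_t(F,F')w(F)w(F')$ and $\|f\|_{\ell^2_w}^2=w(A),\ \|g\|^2_{\ell^2_w}=w(B)$.

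The step I expect to be the main obstacle is \eqref{e:key}: writing the discrete Hodge Laplacian as a signed sum of pairwise jump terms plus a nonnegative diagonal and verifying that the orientation signs $\mp$ do not spoil the error estimate. This is exactly where the setting genuinely differs from graphs: for $0$-forms the form is the plain $\tfrac12\sum\mu_{xy}|u(x)-u(y)|^2$, whereas the up- and down-adjacency of $i$-simplices introduces relative orientations. Since the error bound only involves $|\psi(F)-\psi(F')|$ and $|v(\cdot)|^2$, and $\cosh$ is even, a sign $\epsilon=\pm1$ attached to $u(F')$ leaves both $(p-q)^2/(pq)$ and $|\epsilon v(F')|^2=|v(F')|^2$ unchanged, so the estimate is in fact insensitive to the orientations; the care needed is purely in the bookkeeping that matches $b(F,F')$ with the intrinsic-metric weights of Section 2.
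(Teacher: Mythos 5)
Your proposal is correct and is essentially the paper's own argument: the exponential twist $v=e^{\alpha\psi}u$ by a truncated distance function, the $\cosh$ error estimate combining the Lipschitz bound, the jump size, and the defining inequality of the intrinsic metric, then Gr\"onwall and optimization over the twist parameter are precisely the content of the paper's integral maximum principle (Lemma~\ref{l:monotonicity Delmotte}, applied with $\zeta=2\alpha\psi$, i.e.\ $\kappa=2\alpha$, and the cutoff $\kappa\rho(\cdot,A)\wedge(\kappa\rho(A,B)+1)$ in place of your $\alpha\min\{\rho(A,\cdot),r\}$), with your Cauchy--Schwarz pairing against $e^{-\alpha\psi}g$ replacing the paper's equivalent duality step $\sum_B|e^{-t\mathcal{L}_i}f|^2w=\sup_{\|g\|_{\ell^2_w}=1}|\langle e^{-t\mathcal{L}_i}f,g\rangle|^2$. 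One parenthetical remark is wrong but harmless: the diagonal left over after extracting pairwise jump terms from the Hodge form is \emph{not} nonnegative in general for $i\geq 1$ (e.g.\ a single $2$-simplex with unit weights), yet this never matters, since the diagonal transforms exactly under the twist and contributes no error --- all the argument needs is that the off-diagonal coefficients in Green's formula (Lemma~\ref{l:Green}) are bounded in absolute value by $w_{FF'}$, which is exactly how the paper runs the estimate.
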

Moreover, by setting $A=\{F\}, B=\{F'\},$ we obtain an upper bound for heat kernel between
two $i$-simplicial faces for $i=\{0,1,...,dim K\}.$
$$|p_t(F,F')|\leq \frac{1}{\sqrt{w(F)w(F')}}e^{-\lambda t-\zeta_s(t,\rho(F,F'))}$$
\begin{rem}
Our result is restricted to bounded Hodge Laplacians. It will be interesting to know whether it holds for unbounded cases.
\end{rem}

\section{Simplicial complexes}
In this section we introduce the setting and definitions used throughout this paper. An abstract simplicial complex $K$ on vertices
set $V$ is a collection of subsets of $V$ which is closed under incluson, i.e. if $F \in K$ and
$F'\subset F$, then $F'\in K$. An $i$-face of $K$ is an element of cardinality $i+1$, and we denote the set of all $i$-faces by $S_i(K)$. The dimension of simplicial complex $K$ is the largest dimension of the faces it contains. For instance, graphs are $1$-dim simplicial complexes and the dimension of empty simplex is $(-1).$ For two $(i+1)$-faces with one common $i$-face, we say that they are $i$-down neighbors and two $i$-faces sharing an $(i+1)$-face are called $(i+1)$-up neighbors.

We say that a face $F$ is oriented if the ordering of its vertices is given and write as $[F].$ Two orderings of its vertices are said to determine the same or opposite orientation when the related permutation is even or odd permutation.

A simplicial complex $K$ is called weighted if there is a weight function $w$ on the set of all faces satisfying
\begin{equation*}
w: \bigcup^{dimK}_{i=-1}S_i(K)\rightarrow (0,+\infty).
\end{equation*}
The weight of a face $F$ is $w(F)$ and the weight of a subset $A$ is the sum of weights of faces contained in $A,$ i.e.
$w(A)= \sum_{F \in A} w(F).$

Considering oriented simplicial complexes, the $i$-th chain group with coefficients in $\mathbb{R}$ is a vector space over $\mathbb{R}$ with basis $B_i(K,\mathbb{R})=\{[F]|F \in S_i(K)\}$. The cochain group is
the dual of the chain group with basis given by the set of functions $\{e_{[F]}|[F]\in B_i(K,\mathbb{R})\}$ where
\begin{equation}
e_{[F]}([F'])=\left\{ \begin{array}{r@{\quad}l}
 1 &if [F']=[F]\\
0 &  otherwise.
\end{array}\right.
\end{equation}
We denote the $i$-th chain group and cochain group by $C_i(K,\mathbb{R})$ and $C^{i}(K,\mathbb{R})$ respectively.
The coboundary operator $\delta_i$ $:$ $C^{i}(K,\mathbb{R})\rightarrow C^{i+1}(K,\mathbb{R})$ is the linear map
$$(\delta_if)(v_0,...,v_{i+1})=\sum_{k=0}^{i+1}(-1)^{k}f(v_0,...,\widehat{v_k},...,v_{i+1}),$$
where $\widehat{v_k}$ means that the vertex $v_k$ has been omitted. With the weight function $w$, inner product on cochain group is given as follows
$$(f,g)_{C^{i}}=\sum_{F\in S_i(K)}f([F])g([F])w(F).$$
And the adjoint
$\delta_i^{\ast}$:
$C^{i+1}(K,\mathbb{R})\rightarrow C^{i}(K,\mathbb{R})$ of the coboundary map $\delta_i$
is defined such that
$$
(\delta_if,g)_{C^{i+1}}=(f,\delta_i^{\ast}g)_{C^{i}},
$$
for every $f \in C^{i}(K,\mathbb{R})$ and $g \in C^{i+1}(K,\mathbb{R})$.
Eckmann generalized the graph Laplacian to simplicial complexes and proved the discrete version of
the Hodge theorem \cite{10} which can be formulated as
$$
ker(\delta_i^{\ast}\delta_i+\delta_{i-1}\delta_{i-1}^{\ast})\cong \tilde{H^{i}}(K,\mathbb{R})
$$
for the sequence of linear transformations
$$
...\overset{\delta_{i+1}}\leftarrow C^{i+1}(K,\mathbb{R})\overset{\delta_{i}}\leftarrow C^{i}(K,\mathbb{R})
\overset{\delta_{i-1}}\leftarrow...\leftarrow C^{-1}(K,\mathbb{R})\leftarrow 0
$$
where$$\mathcal{L}_i(K)=\delta_i^{\ast}\delta_i+\delta_{i-1}\delta_{i-1}^{\ast}$$
is the $i$-dimensional Hodge Laplacian.

The operators $\delta_i^{\ast}\delta_i$ and $\delta_{i-1}\delta_{i-1}^{\ast}$ are called the $i$-up and $i$-down
Hodge Laplace operator and are denoted by $\mathcal{L}^{up}_i(K)$ and $\mathcal{L}^{down}_i(K)$ respectively. Moreover, the operators $\mathcal{L}^{up}_i(K)$, $\mathcal{L}^{down}_i(K)$ and  $\mathcal{L}_i(K)$ are self-adjoint and non-negative.

Let $\overline{F}=\{v_0,...,v_{i+1}\}$ be an $(i+1)$-face of a simplicial complex $K$ and
$F=\{v_0,...,\widehat{v_k},...,v_{i+1}\}$ be an $i$-face of  $\overline{F}$. Then the \emph{boundary of the oriented face}
$[\overline{F}]$ is
\begin{equation*}
\partial[\overline{F}]=\sum_{k}(-1)^{k}[v_0,...,\widehat{v_k},...,v_{i+1}],
\end{equation*}
and the sign of $[F]$ in the boundary of $[\overline{F}]$ is denoted by the $sgn([F],\partial[\overline{F}])$
which is equal to $(-1)^{k}$. If $[F]$ is not the boundary of $[\overline{F}]$, we say $sgn([F],\partial[\overline{F}])$ is equal to $0$. And for the sake of brevity, we will write $sgn([F],\partial[\overline{F}])$ as
$\sigma_{F\overline{F}}$ below.

According to \cite{18,19}, the
$i$-up and $i$-down Hodge Laplace operators are given by
\begin{equation}\label{eq1}
(\mathcal{L}^{up}_{i})f([F])
=\sum_{\substack{\overline{F}\in S_{i+1}(K):\\
F\in \partial\overline{F}}}\frac{w(\overline{F})}{w(F)}f([F])
+\sum_{\substack{F'\in S_i:F\neq F',\\F,F'\in \partial\overline{F}}}\frac{w(\overline{F})}{w(F)}\sigma_{F\overline{F}}
\sigma_{F'\overline{F}}f([F'])
\end{equation}
and
\begin{equation}\label{eq2}
(\mathcal{L}^{down}_{i})f([F])
=\sum_{E \in \partial F}\frac{w(F)}{w(E)}f([F])
+\sum_{F':F\cap F'=E}\frac{w(F')}{w(E)}\sigma_{EF}\sigma_{EF'}f([F'])
\end{equation}
where $\partial\overline{F}$ is the set of all $i$-faces of $\overline{F}$ by abuse of notation.

For the degree of an $i$-face $F$, it is the sum of the weights of all faces that contain $F$
in its boundary, that is
$$
\mathrm{deg} F=\sum_{\substack{\overline{F}\in S_{i+1}(K):\\F\in \partial\overline{F}}}w(\overline{F}).
$$

Moreover, if the weight function $w$ on $K$ satisfies
$$
w(F)=\mathrm{deg} F
$$
for every $F \in S_i(K)$ which is not a facet of $K,$
the Hodge Laplace operator is called the \emph{weighted normalized Hodge Laplacian operator}.
If a simplicial complex satisfies that there is a positive integer $M$ such that
$$
\sharp\{F \in S_i(K)| E \in \partial F\}\leq M < \infty
$$
holds for all $E \in S_{i-1}(K),$
then the Hodge Laplacian operator $\mathcal{L}_{i}$ acting on functions on $i$-simplicial faces of oriented simplicial complexes $(K,w)$ is bounded from $l_{w}^{2}$ to $l_{w}^{2}$  if and only if
\begin{equation}\label{bounded}
b:=\sup_{F \in S_j(K)}\frac{1}{w(F)} \sum_{F \in \partial\overline{F}} w(\overline{F})<\infty \quad for\quad  j=i,i-1\tag{$\ast$}
\end{equation}

Next, we will introduce the following notations for different $i$-faces $F$ and $F'.$

$(1)\tau_{FEF'}:=\frac{w(F)w(F')}{w(E)}.$

$(2)w^{up}_{FF'}:=\left\{ \begin{array}{r@{\quad}l}
w(\overline{F}), &if F',F\in \partial\overline{F};\\
0,&  otherwise.
\end{array}\right.$

$(3)w^{down}_{FF'}:=\left\{ \begin{array}{r@{\quad}l}
\tau_{FEF'}, &if F'\cap F=E;\\
0, &  otherwise.
\end{array}\right.$

$(4)w_{FF'}:=w^{up}_{FF'}+w^{down}_{FF'}.$

$(5)\mathrm{Deg}(F):=\frac{\mathrm{deg F}}{w(F)}.$

For convenience, we extend the notations to the total set $S_i(K)\times S_i(K),$
such that $\tau_{FEF}=w^{up}_{FF}=w^{down}_{FF}=w_{FF}=0.$
And we write $F\sim F'$ if $w_{FF'}\neq 0$ holds. Also for simplicity, we will omit $``[~]" $ for oriented faces in the following.
\begin{lemma}[Green's formula]\label{l:Green}Let $\mathcal{L}_{i}$ be the bounded Hodge Laplacian operator. Then for all $f,g\in \ell_{w}^{2}$ acting on $i$-simplices
$$
(\mathcal{L}_{i} f,g)=(\delta_i f,\delta_i g)+(\delta_{i-1}^{\ast}f,\delta_{i-1}^{\ast}g)
$$
In particular,
\begin{eqnarray*}
\sum_{F\in S_i(K)}(\mathcal{L}_{i} f)(F)g(F)w(F)
=\sum_{F\in S_i(K)}f(F)g(F)(\sum_{F\in \partial \overline{F}} w(\overline{F})+\sum_{E\in \partial F}\tau_{FEF})\\
 +\sum_{F\neq F'}f(F')g(F)(w(\overline{F})\sigma_{F\overline{F}}\sigma_{F'\overline{F}}+\tau_{FEF'}\sigma_{EF}\sigma_{EF'})
\end{eqnarray*}

\end{lemma}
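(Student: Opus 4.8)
The plan is to treat the two displayed identities in turn: the first is an abstract integration-by-parts formula, and the second is its explicit combinatorial expansion. For the first identity I would begin from $\mathcal{L}_i = \delta_i^{\ast}\delta_i + \delta_{i-1}\delta_{i-1}^{\ast}$ and apply the defining adjoint relation twice. Since $\delta_i f \in C^{i+1}(K,\mathbb{R})$ and $\delta_i^{\ast}$ is by definition the adjoint of $\delta_i$, one has $(\delta_i^{\ast}\delta_i f, g)_{C^i} = (\delta_i f, \delta_i g)_{C^{i+1}}$; likewise, writing $\delta_{i-1}^{\ast} f \in C^{i-1}(K,\mathbb{R})$ and using that $\delta_{i-1}^{\ast}$ is the adjoint of $\delta_{i-1}$, one gets $(\delta_{i-1}\delta_{i-1}^{\ast} f, g)_{C^i} = (\delta_{i-1}^{\ast} f, \delta_{i-1}^{\ast} g)_{C^{i-1}}$. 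Summing the two yields $(\mathcal{L}_i f, g) = (\delta_i f, \delta_i g) + (\delta_{i-1}^{\ast} f, \delta_{i-1}^{\ast} g)$. The only analytic point is convergence of all inner products for $f, g \in \ell^2_w$; this is ensured by boundedness of $\mathcal{L}_i$, equivalently condition \eqref{bounded}, which forces $\mathcal{L}^{up}_i$ and $\mathcal{L}^{down}_i$ to be bounded and hence, via $\|\delta_i f\|^2 = (\mathcal{L}^{up}_i f, f)$ and $\|\delta_{i-1}^{\ast} f\|^2 = (\mathcal{L}^{down}_i f, f)$, makes $\delta_i$ and $\delta_{i-1}^{\ast}$ bounded, so every term lives in the appropriate $\ell^2_w$ space.

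For the second identity I would expand $(\mathcal{L}_i f, g) = \sum_{F \in S_i(K)} (\mathcal{L}_i f)(F) g(F) w(F)$ and insert the pointwise expressions \eqref{eq1} and \eqref{eq2}, using $\mathcal{L}_i = \mathcal{L}^{up}_i + \mathcal{L}^{down}_i$. Separating the coefficient of $f(F)$ (the diagonal part) from the coefficients of $f(F')$ with $F' \neq F$ (the off-diagonal part), the weight $w(F)$ multiplying each pointwise coefficient produces exactly the claimed terms: in the diagonal part $\frac{w(\overline{F})}{w(F)} \cdot w(F) = w(\overline{F})$ and $\frac{w(F)}{w(E)} \cdot w(F) = \frac{w(F)^2}{w(E)} = \tau_{FEF}$, while in the off-diagonal part $\frac{w(\overline{F})}{w(F)} \cdot w(F) = w(\overline{F})$ for the up-term and $\frac{w(F')}{w(E)} \cdot w(F) = \frac{w(F)w(F')}{w(E)} = \tau_{FEF'}$ for the down-term, carrying the sign factors $\sigma_{F\overline{F}}\sigma_{F'\overline{F}}$ and $\sigma_{EF}\sigma_{EF'}$ along unchanged. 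Collecting these reproduces the stated formula.

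The step I expect to be the main obstacle is the justification of rearranging the (generically infinite) double sums so that the terms can be grouped into diagonal and off-diagonal parts. Absolute convergence of $\sum_F \sum_{F'} |f(F')|\,|g(F)|\, w_{FF'}$ must be checked before reindexing; this follows from \eqref{bounded} together with the Cauchy--Schwarz inequality, because the row sums of the weights $w^{up}_{FF'}$ and $w^{down}_{FF'}$ are then uniformly bounded. A secondary point of care is the correct identification of the combinatorial data appearing off the diagonal: for a pair $F \neq F'$ the face $\overline{F}$ is the common $(i+1)$-face containing both (which exists only when $F$ and $F'$ differ in a single vertex, in which case it is unique and $w(\overline{F}) = w^{up}_{FF'}$), while $E = F \cap F'$ is the common $(i-1)$-face with $\tau_{FEF'} = w^{down}_{FF'}$; one must keep the orientation signs consistent with the conventions fixed above. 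Once convergence and these identifications are in place, the second identity follows by direct substitution, and its consistency with the first serves as a check.
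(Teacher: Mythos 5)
Your proof is correct, but it reaches the second (combinatorial) identity by a different route than the paper. The paper's proof is a single chain of equalities starting from the right-hand side of the first identity: it expands the Dirichlet forms $(\delta_i f,\delta_i g)$ and $(\delta_{i-1}^{\ast}f,\delta_{i-1}^{\ast}g)$ as double sums over $(i+1)$-faces $\overline{F}$ and $(i-1)$-faces $E$ respectively, and then regroups into diagonal ($F=F'$) and off-diagonal ($F\neq F'$) terms, so that the ``in particular'' statement is literally a corollary of the first identity. You instead prove the two identities by two independent computations: the first by applying adjointness twice (same as the paper, which compresses this into ``by direct computation''), and the second by expanding the left-hand side $\sum_F(\mathcal{L}_i f)(F)g(F)w(F)$ using the pointwise formulas \eqref{eq1} and \eqref{eq2} cited from Horak--Jost, rather than by expanding the forms. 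Both computations are valid and the bookkeeping in yours is right: the uniqueness of the common cofaces $\overline F=F\cup F'$ and common faces $E=F\cap F'$ for $F\neq F'$, and the identifications $w(\overline F)=w^{up}_{FF'}$, $\tau_{FEF'}=w^{down}_{FF'}$, are exactly as you say. The trade-off is that your route leans on the correctness of \eqref{eq1}--\eqref{eq2} (which the paper states without proof), whereas the paper's route is self-contained given only the definition of $\delta_i$ and its adjoint; on the other hand, you supply what the paper omits entirely, namely the justification (via \eqref{bounded} and a Schur-test/Cauchy--Schwarz bound on the row sums $\sum_{F'}w_{FF'}\leq C\,w(F)$) that all double sums converge absolutely and may be rearranged, which is a genuine gain in rigor for infinite complexes.
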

\begin{proof}
By direct computation
\begin{eqnarray*}
(\mathcal{L}_{i} f,g)
&=&(\delta_if,\delta_ig)+(\delta_{i-1}^{\ast}f,\delta_{i-1}^{\ast}g)\\
&=&\sum_{\overline{F}\in S_{i+1}(K)}\sum_{F,F'\in \partial\overline{F}}\sigma_{F\overline{F}}\sigma_{F'\overline{F}}f(F)g(F')w(\overline{F})\\
&+&\sum_{E \in S_{i-1}(K)}\sum_{F,F'\in S_{i}(K)}\sigma_{EF}\sigma_{EF'}\tau_{FEF'}f(F)g(F')\\
&=&\sum_{F\in S_i(K)}f(F)g(F)(\sum_{F\in \partial \overline{F}} w(\overline{F})+\sum_{E\in \partial F}\tau_{FEF})\\
&+&\sum_{F\neq F'}f(F')g(F)(w(\overline{F})\sigma_{F\overline{F}}\sigma_{F'\overline{F}}+\tau_{FEF'}\sigma_{EF}\sigma_{EF'})
\end{eqnarray*}
\end{proof}
In this subsection, we introduce a kind of metrics on simplicial complexes which could be viewed as generalizations of the intrinsic metrics on graphs introduced in \cite{13}. Indeed, intrinsic metrics on graphs have been applied successfully to various problems, see \cite{1,4,11,12,16,20,21}.
\begin{defi}[Intrinsic metric]
A pseudo metric $\rho$ is called an intrinsic metric with respect to a simplicial complex $(K,\omega)$ if for
all $F \in S_{i}(K)$
\begin{equation}
\sum_{F'\in S_i(K)}w_{FF'}\rho^{2}(F,F')\leq w(F).
\end{equation}
\end{defi}
\begin{rem}
In our setting, there always exists an intrinsic metric on a weighted simplicial complex, see Definition \ref{defi23}
mimicking the definition introduced by Huang \cite{17}. In general, intrinsic metrics are not unique.
\end{rem}
\begin{defi}\label{defi23}
We define a function $\mu(F,F')$ by
\begin{align*}
\mu(F,F')=
&\min\left\{\sqrt{\frac{w(F)}{\sum\limits_{F'}w_{FF'}}},\sqrt{\frac{w(F')}{\sum\limits_{F''}w_{F'F''}}},
1\right\}
\end{align*}
for all pairs $(F,F')$ satisfying $w_{FF'}\neq 0$. It naturally induces a metric for different $i$-dim simplicial faces of $(K,\omega)$ by
$$
\rho(F,F'):=\inf\left\{\sum\mu(F_{j},F_{j+1}):F_0=F,...,F_j,...,F_m=F', \ s.t.\ \forall 0\leq j \leq m, F_{j}\sim F_{j+1} \right\}.
$$
where the infimum is taken over all such chains $F_0=F,...,F_j,...,F_m=F'$ of $i$-faces between $F$ and $F'$ with $w_{F_jF_{j+1}}\neq 0$ . If there is no such chain
between $F$ and $F'$, we define $\rho(F,F')=\infty$ and $\rho(F,F)=0$ for the same $i$-dim simplicial face.
\end{defi}
It is obvious that the above metrics do satisfy the condition $(6)$.
\begin{rem}
The intrinsic metric on graphs in \cite{17} is defined as follows:
$$
\delta(x,y)=\inf_{x=x_0\sim...\sim x_n=y}\sum^{n-1}_{i=0}(\mathrm{Deg}(x_i)\vee \mathrm{Deg}(x_{i+1})\vee 1)^{-\frac{1}{2}},
x,y \in X
$$
When the intrinsic metric defined in (\ref{defi23}) applied to graphs,
there is
$$\rho(x,y)=\delta(x,y)$$
\end{rem}
For bounded Hodge Laplacian satisfying (\ref{bounded}), there is a kind of canonical intrinsic metric analogous to the combinational distance on graphs.
\begin{defi}\label{defi24}
We can define another form of intrinsic metric between
$F, F' \in S_i(K)$ by
\begin{equation*}
\rho(F,F')=\left\{ \begin{array}{r@{\quad}l}\frac{\inf\sum\mu(F_j,F_{j+1})}{(i+1)\sqrt{b}},& if F\neq F'; \\
0, & if F=F'.
\end{array}\right.
\end{equation*}
where the infimum is taken over all such chains $F_0=F,...,F_j,...,F_m=F'$ of $i$-faces between $F$ and $F'$ with $w_{F_jF_{j+1}}\neq 0.$ If there is no such chain
between $F$ and $F'$, we define $\rho(F,F')=\infty$. And
\begin{equation*}
\mu(F_j,F_{j+1})=\left\{ \begin{array}{r@{\quad}l}1,& if w_{F_jF_{j+1}}\neq 0; \\
0, & otherwise.
\end{array}\right.
\end{equation*}
\end{defi}
We will show the above metric is also an intrinsic metric.
\begin{proof} It is easy to see that
$$\rho(F,F')\leq \frac{1}{(i+1)\sqrt{b}},\quad w_{FF'}\neq 0$$
such that
\begin{eqnarray*}&&
\sum_{F'\in S_i}w_{FF'}\rho^{2}(F,F')\nonumber\\
&\leq& \frac{1}{b(i+1)^{2}}\left((i+1)\sum_{F\in\partial\overline{F}}w(\overline{F})+
\sum_{E\in \partial F}\sum_{F'\neq F}\tau_{FEF'}\right)\nonumber\\
&\leq&\frac{1}{b(i+1)^{2}}\left((i+1)bw(F)+w(F)
\sum_{E\in \partial F}\frac{bw(E)-w(F)}{w(E)}\right)\nonumber\\
&\leq&w(F)
\end{eqnarray*}
\end{proof}
\begin{rem}
For graph case, the normalized Laplacian must satisfy (\ref{bounded}) while
for higher dimensional simplicial complexes,
the normalized Hodge Laplacian operator
may not satisfies (\ref{bounded}). So for normalized Hodge Laplacian on higher dimensional simplicial complexes, the metric defined in Definition 2.3. may be not suitable.
\end{rem}

The \emph{jumps size} $s$ of a pseudo metric $\rho$ is given by
\begin{align*}
    s:=\sup\{\rho(F,F')\mid F,F'\in S_i(K), F\sim F'\}\in[0,\infty].
\end{align*}
If there is no $F$ and $F'$ satisfying $w_{FF'}\neq 0$, it
is reasonable to define $s=\infty$. From now on, $\rho$ always denotes an intrinsic metric and $s$ denotes its jump size.
A function $f:S_i(K)\to\mathbb{R} $ is Lipschitz (w.r.t.the metric $\rho$) if $|f(F)-f(F')|\leq \kappa\rho(F,F')$ for any $F,F'\in S_i(K).$ The minimal constant $\kappa$ that satisfies the above inequality is called the Lipschitz constant of $f$ and $supp f$ means the maximal set of simplicial faces $F$ satisfying $f(F)\neq 0,$ i.e.
$supp f =\{F|F \in S_i(K),f(F)\neq 0\}.$
\section{Proof of main theorem}
Let $(K,w)$ be an oriented weighted simplicial complex.
\begin{defi}
We say $u:[0,\infty)\times S_i(K)\to \mathbb{R}$ solves heat equation if
\begin{equation}
\begin{cases}
\frac{\partial}{\partial t}u(t,F) = -\mathcal{L}_{i} u(t,F),\\
u(0,F) = f(F).
\end{cases}
\end{equation}
The heat kernel, $p_t(F,F'),$ is defined as the solution with the initial condition $f(F)=\frac{1}{w(F')}\delta_{F'}(F).$
For a general initial data $f(F),$ the solution can be written as $$u(t,F) = \sum_{F'\in S_i(K)} p_t(F,F') f(F') w(F').$$
\end{defi}
The integral maximum principle on Riemannian manifolds was introduced by Grigor'yan \cite{15}. We prove a variant of the integral maximum principle on simplicial complexes.
\begin{lemma}\label{l:monotonicity Delmotte}
Let $(K,w)$ be an oriented weighted simplicial complex with an  intrinsic metric $\rho$ with jump size $s>0,$ and $\zeta$ be a bounded Lipschitz function on $i$-simplices with the Lipschitz constant $\kappa.$ Let $f:[0,\infty)\times S_i(K)\to\mathbb{R}$ solve the heat equation on $K$ and $\lambda$ be the first eigenvalue of the bounded Hodge Laplacian. Set
$$E(t):= \sum_{F\in
S_i(K)}f^2(t,F)e^{\zeta(F)}w(F).$$ Then $$\exp\left({2\lambda t-\frac{2}{s^2}(\cosh(\frac{\kappa s}{2})-1)t}\right)E(t)$$ is  nonincreasing in $t\in [0,\infty)$.
\end{lemma}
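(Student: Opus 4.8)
The plan is to differentiate $E(t)$ in time, use the heat equation to replace $\partial_t f$ by $-\mathcal{L}_i f$, apply Green's formula (Lemma~\ref{l:Green}) to turn the resulting bilinear expression into a sum over pairs $F\sim F'$, and then bound the off-diagonal contribution by a discrete gradient estimate weighted by the metric, exploiting the intrinsic metric condition $(6)$ together with the Lipschitz bound on $\zeta$. Finally I would compare the combined diagonal and spectral terms against the exponential prefactor to conclude monotonicity.

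\medskip

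First I would compute
\[
\frac{d}{dt}E(t)=2\sum_{F\in S_i(K)}f(t,F)\,\partial_t f(t,F)\,e^{\zeta(F)}w(F)
=-2\sum_{F\in S_i(K)}(\mathcal{L}_i f)(F)\,f(F)e^{\zeta(F)}w(F),
\]
so that $\tfrac{d}{dt}E(t)=-2(\mathcal{L}_i f,\, f e^{\zeta})$. Setting $g=fe^{\zeta}$ in Green's formula expands this as a diagonal part $\sum_F f(F)g(F)\big(\sum_{F\in\partial\overline F}w(\overline F)+\sum_{E\in\partial F}\tau_{FEF}\big)$ plus an off-diagonal part summing $f(F')g(F)\,w_{FF'}\sigma\sigma$ over $F\neq F'$. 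The standard trick (the ``antisymmetrization'' used in \cite{3,6}) is to symmetrize the off-diagonal sum: writing it over unordered pairs and completing the square produces terms like $w_{FF'}\big(f(F)e^{\zeta(F)/2}-f(F')e^{\zeta(F')/2}\big)^2$ together with a residual involving $\big(e^{\zeta(F)/2}-e^{\zeta(F')/2}\big)^2$. The first (Dirichlet-type) piece combines with the diagonal term to rebuild $2(\mathcal{L}_i(fe^{\zeta/2}),fe^{\zeta/2})\geq 2\lambda\|fe^{\zeta/2}\|^2=2\lambda E(t)$ by the variational characterization of $\lambda$.

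\medskip

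The residual term is where the metric enters. On each pair $F\sim F'$ the factor $\big(e^{\zeta(F)/2}-e^{\zeta(F')/2}\big)^2$ is controlled, via the elementary inequality bounding $(e^{a/2}-e^{b/2})^2$ in terms of $(\cosh\tfrac{a-b}{2}-1)$ and $e^{(a+b)/2}$, by $2(\cosh(\tfrac{\kappa s}{2})-1)$ times $e^{\zeta(F)}\wedge$-type weights, using $|\zeta(F)-\zeta(F')|\leq\kappa\rho(F,F')\leq\kappa s$ since $\rho(F,F')\leq s$ whenever $F\sim F'$. Summing over $F'$ and invoking the intrinsic condition $\sum_{F'}w_{FF'}\rho^2(F,F')\leq w(F)$ converts $\sum_{F'}w_{FF'}$ into a factor bounded by $w(F)/s^2$ after accounting for $\rho^2\leq s^2$ on the support. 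This yields
\[
\frac{d}{dt}E(t)\leq -2\lambda E(t)+\frac{2}{s^2}\Big(\cosh\big(\tfrac{\kappa s}{2}\big)-1\Big)E(t).
\]
Rearranging and recognizing the right-hand side as the logarithmic derivative of the stated prefactor gives that $\exp\!\big(2\lambda t-\tfrac{2}{s^2}(\cosh(\tfrac{\kappa s}{2})-1)t\big)E(t)$ has nonpositive derivative, hence is nonincreasing.

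\medskip

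\emph{The main obstacle} is the residual-term estimate: carefully completing the square in the off-diagonal Green expansion so that the leftover is manifestly a sum of products of $w_{FF'}$ with $\big(e^{\zeta(F)/2}-e^{\zeta(F')/2}\big)^2$, and then choosing the right elementary inequality so that the coupling $\frac{1}{s^2}(\cosh(\tfrac{\kappa s}{2})-1)$ emerges \emph{exactly}, with the intrinsic inequality $(6)$ absorbing $\sum_{F'}w_{FF'}\rho^2(F,F')$ into $w(F)$. Matching constants here is delicate because the $\sigma$-sign factors in $w_{FF'}$ must combine correctly under symmetrization, and one must verify the Dirichlet piece genuinely reconstructs $(\mathcal{L}_i u,u)$ for $u=fe^{\zeta/2}$ so that the spectral bound $\lambda$ applies to the correct test function.
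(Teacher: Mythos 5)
Your overall architecture matches the paper's: differentiate $E$, add and subtract the quadratic form of $u=fe^{\zeta/2}$ so that the Rayleigh quotient produces the $-2\lambda E(t)$ term, and bound the leftover using the Lipschitz bound on $\zeta$ together with the intrinsic-metric inequality. However, your residual estimate contains a genuine error in its final step. You first discard the factor $\rho^2(F,F')$ by bounding $\cosh\frac{\zeta(F)-\zeta(F')}{2}-1\le \cosh\frac{\kappa s}{2}-1$ (valid, since $\rho(F,F')\le s$ for neighbors), and then claim the intrinsic condition $\sum_{F'}w_{FF'}\rho^2(F,F')\le w(F)$ ``converts $\sum_{F'}w_{FF'}$ into a factor bounded by $w(F)/s^2$ after accounting for $\rho^2\le s^2$.'' This inference is backwards: from $\rho^2(F,F')\le s^2$ one only gets $\sum_{F'}w_{FF'}\rho^2(F,F')\le s^2\sum_{F'}w_{FF'}$, which bounds the intrinsic sum \emph{by} the bare weight sum, not the reverse. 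In general $\sum_{F'}w_{FF'}$ can be far larger than $w(F)/s^2$: the jump size $s$ is a supremum over all neighboring pairs in the whole complex, and near a particular $F$ every neighbor may sit at distance $\epsilon\ll s$, in which case the intrinsic condition still permits $\sum_{F'}w_{FF'}$ of order $w(F)/\epsilon^2$.

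The fix --- and it is exactly what the paper does --- is to never decouple $\rho^2(F,F')$ from the $\cosh$ factor. One proves the pointwise claim
$$\cosh\frac{\zeta(F)-\zeta(F')}{2}-1\ \le\ \rho^2(F,F')\,\frac{1}{s^2}\left(\cosh\frac{\kappa s}{2}-1\right)$$
for every pair with $w_{FF'}\neq 0$, by using the Lipschitz bound to reach $\cosh\frac{\kappa\rho(F,F')}{2}-1$ and then the fact that $t\mapsto t^{-2}(\cosh\frac{\kappa t}{2}-1)$ is \emph{increasing}, so that $\rho(F,F')\le s$ improves, rather than degrades, the constant. With $\rho^2(F,F')$ retained, the intrinsic condition $\sum_{F'}w_{FF'}\rho^2(F,F')\le w(F)$ applies verbatim and yields the needed bound $\frac{2}{s^2}(\cosh\frac{\kappa s}{2}-1)E(t)$ for the residual. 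A secondary caution: your ``completing the square'' into terms $w_{FF'}\bigl(f(F)e^{\zeta(F)/2}-f(F')e^{\zeta(F')/2}\bigr)^2$ is not literally available for the Hodge Laplacian because of the sign factors $\sigma_{F\overline{F}}\sigma_{F'\overline{F}}$; the clean route, as in the paper, is to add and subtract $2(\mathcal{L}_i(fe^{\zeta/2}),fe^{\zeta/2})$ so that the diagonal terms cancel and the leftover off-diagonal terms carry $(e^{\zeta(F)/2}-e^{\zeta(F')/2})^2$, whose sign ambiguity is then absorbed by $|\sigma_{F\overline{F}}\sigma_{F'\overline{F}}|\le 1$ and the arithmetic--geometric mean inequality --- you flagged this concern yourself, and it does work out.
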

\begin{proof}
From dominated convergence theorem,
$$E'(t)=2\sum_{F \in S_i{(K)}}f(t,F)\partial_{t} f(t,F)e^{\zeta(F)}w(F)$$
Since $f$ solves the heat equation and together with Green's formula, we obtain
\begin{eqnarray*}
 E'(t)&=&-2\sum_{F\in S_i(K)}f^{2}(F)e^{\zeta(F)}\sum_{F \in \partial\overline{F}}w(\overline{F})
   -2\sum_{F\in S_i(K)}f^{2}(F)e^{\zeta(F)}\sum_{E\in \partial F}\tau_{FEF} \\
   &-&2\sum_{F\neq F'}f(F)f(F')e^{\zeta(F)}\sigma_{F\overline{F}}\sigma_{F'\overline{F}}w(\overline{F})\\
    &-&2\sum_{F\neq F'}f(F)f(F')e^{\zeta(F)}\sigma_{EF}\sigma_{EF'}\tau_{FEF'}\\
   &=&-2\sum_{F\in S_i(K)}f^{2}(F)e^{\zeta(F)}\sum_{F \in \partial\overline{F}}w(\overline{F})
   -2\sum_{F\in S_i(K)}f^{2}(F)e^{\zeta(F)}\sum_{E\in \partial F}\tau_{FEF} \\
   &-&\sum_{F\neq F'}f(F)f(F')\sigma_{F\overline{F}}\sigma_{F'\overline{F}}w(\overline{F})(e^{\zeta(F)}+e^{\zeta(F')})\\
   &-&\sum_{F\neq F'}f(F)f(F')\sigma_{EF}\sigma_{EF'}\tau_{FEF'}(e^{\zeta(F)}+e
   ^{\zeta(F')})\\
\end{eqnarray*}
Moreover
\begin{eqnarray*}
E'(t) &=&-2(\mathcal{L}_if e^{\frac{1}{2}\zeta},f e^{\frac{1}{2}\zeta})
    +2(\mathcal{L}_if e^{\frac{1}{2}\zeta},f e^{\frac{1}{2}\zeta})+E'(t) \ \ \ \ \ \ \ \ \ \ \ \ \ \ \ \ \ \ \ \ \ \ \  \\
    &=&I+II
\end{eqnarray*}
where $I=-2(\mathcal{L}_if e^{\frac{1}{2}\zeta},f e^{\frac{1}{2}\zeta}),$
$II=2(\mathcal{L}_if e^{\frac{1}{2}\zeta},f e^{\frac{1}{2}\zeta})+E'(t)$

For the first term, by the Rayleigh quotient of the first eigenvalue,\ \ \ \ \ \ \ \ \ \ \ \ \ \ \ \ \ \ \ \ \ \ \ \ \ \ \
$$I\leq -2\lambda\sum f^2e^{\zeta} w.$$
For the second term, using the Green's formula again
\begin{eqnarray*}
II&=&\sum_{F\in S_i(K)}2f^{2}(F)e^{\zeta(F)}(\sum_{F\in \partial \overline{F}} w(\overline{F})+\sum_{E\in \partial F}\tau_{FEF})\\
&+&2\sum_{F\neq F'}f(F')f(F)e^{\frac{1}{2}\zeta(F)+\frac{1}{2}\zeta(F')}(w(\overline{F})\sigma_{F\overline{F}}\sigma_{F'\overline{F}}+\tau_{FEF'}\sigma_{EF}\sigma_{EF'})\\
&-&2\sum_{F\in S_i(K)}f^{2}(F)e^{\zeta(F)}\sum_{F \in \partial\overline{F}}w(\overline{F})
-2\sum_{F\in S_i(K)}f^{2}(F)e^{\zeta(F)}\sum_{E\in \partial F}\tau_{FEF} \\
&-&\sum_{F\neq F'}f(F)f(F')\sigma_{F\overline{F}}\sigma_{F'\overline{F}}w(\overline{F})(e^{\zeta(F)}+e^{\zeta(F')})\\
&-&\sum_{F\neq F'}f(F)f(F')\sigma_{EF}\sigma_{EF'}\tau_{FEF'}(e^{\zeta(F)}+e
^{\zeta(F')})\\
&=&-\sum_{F\neq F'}f(F)f(F')\sigma_{F\overline{F}}\sigma_{F'\overline{F}}w(\overline{F})
(e^{\frac{1}{2}\zeta(F)}-e^{\frac{1}{2}\zeta(F')})^{2}\nonumber\\
 &-&\sum_{F\neq F'}f(F)f(F')\sigma_{EF}\sigma_{EF'}\tau_{FEF'}
(e^{\frac{1}{2}\zeta(F)}-e^{\frac{1}{2}\zeta(F')})^{2}\nonumber\\
&=&-\sum_{F\neq F'}f(F)f(F')\sigma_{F\overline{F}}\sigma_{F'\overline{F}}w(\overline{F})
e^{\frac{1}{2}\zeta(F)+\frac{1}{2}\zeta(F')}
(e^{\frac{1}{4}\zeta(F)-\frac{1}{4}\zeta(F')}-e^{\frac{1}{4}\zeta(F')-\frac{1}{4}\zeta(F)})^{2}\nonumber\\
&-&\sum_{F\neq F'}f(F)f(F')\sigma_{EF}\sigma_{EF'}\tau_{FEF'}
e^{\frac{1}{2}\zeta(F)+\frac{1}{2}\zeta(F')}
(e^{\frac{1}{4}\zeta(F)-\frac{1}{4}\zeta(F')}-e^{\frac{1}{4}\zeta(F')-\frac{1}{4}\zeta(F)})^{2}\nonumber\\
\end{eqnarray*}
Because $2(\cosh\frac{x-y}{2}-1)=(e^{\frac{1}{4}x-\frac{1}{4}y}-e^{\frac{1}{4}y-\frac{1}{4}x})^{2}$, then
\begin{eqnarray*}
II&=&-2
   \sum_{F\neq F'}f(F)f(F')\sigma_{F\overline{F}}\sigma_{F'\overline{F}}w(\overline{F})
   e^{\frac{1}{2}\zeta(F)+\frac{1}{2}\zeta(F')}\left(\cosh\frac{\zeta(F)-\zeta(F')}{2}-1\right)\nonumber\\
   &-&2
   \sum_{F\neq F'}f(F)f(F')\sigma_{EF}\sigma_{EF'}\tau_{FEF'}
   e^{\frac{1}{2}\zeta(F)+\frac{1}{2}\zeta(F')}\left(\cosh\frac{\zeta(F)-\zeta(F')}{2}-1\right)\nonumber\\
\end{eqnarray*}
\begin{eqnarray*}
   &\leq&
   \sum_{F\neq F'}w(\overline{F})(f^{2}(F) e^{\zeta(F)}+f^{2}(F') e^{\zeta(F')})
   \left(\cosh\frac{\zeta(F)-\zeta(F')}{2}-1\right)\nonumber\\
  &+&
   \sum_{F\neq F'}\tau_{FEF'}(f^{2}(F) e^{\zeta(F)}+f^{2}(F') e^{\zeta(F')})
   \left(\cosh\frac{\zeta(F)-\zeta(F')}{2}-1\right)\nonumber\\
\end{eqnarray*}
From the symmetry of equation in $F$ and $F'$,
\begin{eqnarray*}
II&\leq&
   2\sum_{F\neq F'}w(\overline{F})f^{2}(F) e^{\zeta(F)}
   \left(\cosh\frac{\zeta(F)-\zeta(F')}{2}-1\right)\nonumber\\
   &+&
   2\sum_{F\neq F'}\tau_{FEF'}f^{2}(F) e^{\zeta(F)}
   \left(\cosh\frac{\zeta(F)-\zeta(F')}{2}-1\right)\nonumber\\
   &=&2\sum_{F\neq F'}f^{2}(F) e^{\zeta(F)}
   \left(\cosh\frac{\zeta(F)-\zeta(F')}{2}-1\right)(w^{up}_{FF'}+w^{down}_{FF'})\nonumber\\
   &=&2\sum_{F\neq F'}f^{2}(F) e^{\zeta(F)}
   \left(\cosh\frac{\zeta(F)-\zeta(F')}{2}-1\right)w_{FF'}\nonumber\\
\end{eqnarray*}
We claim that for any $F$ and $F'$ with $w_{FF'}\neq 0$,
$$\cosh\frac{\zeta(F)-\zeta(F')}{2}-1\leq \rho^{2}(F,F')\frac{1}{s^2}(\cosh\frac{\kappa s}{2}-1).$$ It suffices to consider $F\sim F'$ with $\rho(F,F')>0.$  Since $\zeta$ is a Lipschitz function with Lipschitz constant $\kappa,$
\begin{eqnarray*}
    &&\cosh\frac{\zeta(F)-\zeta(F')}{2}-1\\
    &\leq&\cosh\frac{\kappa \rho(F,F')}{2}-1
    =\rho^{2}(F,F')\frac{\cosh\frac{\kappa \rho(F,F')}{2}-1}{\rho^{2}(F,F')}\\
    &\leq&\rho^{2}(F,F')\frac{1}{s^2}(\cosh\frac{\kappa s}{2}-1),
\end{eqnarray*} where we used the monotonicity of the function
  $$t\mapsto \frac{1}{t^2}(\cosh\frac{\kappa t}{2}-1),\quad t>0,$$ and the definition of jump size $s$ of the metric $\rho.$
  This proves the claim.

  Hence by this claim
  \begin{eqnarray*}II&\leq& 2\sum_{F\neq F'}w_{FF'}f^2(t,F)e^{\zeta(F)}\rho^{2}(F,F')\frac{1}{s^2}(\cosh\frac{\kappa s}{2}-1)\\
  &\leq &\frac{2}{s^2}(\cosh\frac{\kappa s}{2}-1)\sum_{F}f^2(t,F)e^{\zeta(F)}w(F),
  \end{eqnarray*}
  where we used that $\rho$ is an intrinsic metric. Combining the estimates for the terms $I$ and $II,$ for any $t\geq 0,$
  $$E'(t)\leq (-2\lambda+\frac{2}{s^2}(\cosh\frac{\kappa s}{2}-1))E(t).$$
  which shows $$\exp\left({2\lambda t-\frac{2}{s^2}(\cosh(\frac{\kappa s}{2})-1)t}\right)E(t)$$
  is nonincreasing in $t\in [0,\infty)$.
\end{proof}

Given $s>0,$ for any fixed $t>0$ and $r\geq 0$, we denote
\begin{equation}\label{e:def eta}
\zeta_s(t,r)=-\inf_{\kappa>0}\left(\frac{1}{s^2}(\cosh\frac{\kappa s}{2}-1)t-\frac{\kappa}{2} r\right).
\end{equation}
It is easy to see that the infinium is attained at $\kappa_0=\frac{2}{s}\asinh{\frac{rs}{t}}$ and \begin{equation}\label{eq:eta}\zeta_s(t,r)=\frac{1}{s^2}\left(rs\asinh{\frac{rs}{t}}-\sqrt{t^2+r^2s^2}+t\right).\end{equation} Note that $\zeta_s(t,r)=\zeta(\frac{t}{s^{2}}, \frac{r}{s}),$ where $\zeta(t,r)=r\asinh{\frac{r }{t}}-\sqrt{t^2+r^2}+t.$ The function $\zeta(t,r)$ was already used by Davies, Pang, Delmotte to obtain eatimates of heat kernel  \cite{8,9,24}.

Moreover, according to \cite{9}
\begin{equation}
\begin{cases}
\zeta(t,r) \leq \frac{r^{2}}{2t},\text{for }t\geq0\\
 \zeta(t,r)\geq h \arcsin(h^{-1})\frac{r^{2}}{2t}, \text{for }t\geq hr.
\end{cases}
\end{equation}

Now, we are ready to prove the DGG lemma on simplicial complexes.
\begin{proof}
Denote $r=\rho(A,B).$ For any $\kappa>0,$ define $\zeta(F)=\kappa\rho(F,A)\wedge (\kappa\rho(A,B)+1),$ $F\in S_i(K).$ Then $\zeta$ is a Lipschitz function with Lipschitz constant at most $\kappa$ and for any function $g$ on $i$-simplices $$\sum_{B}|g|^2e^{\zeta}w\geq e^{\kappa r}\sum_{B}|g|^2w.$$ For $f\in\ell^2_w,$ let $f(t,F)=e^{-t\mathcal{L}_i}f(F).$ Then the above inequality for $g(\cdot)=f(t,\cdot)$ and Lemma \ref{l:monotonicity Delmotte} yield
\begin{eqnarray*}\sum_{F\in B}|f(t,F)|^2w(F)&\leq& e^{-\kappa r}E(t)\leq \exp\left(-2\lambda t+\frac{2}{s^2}(\cosh\frac{\kappa s}{2}-1)t-\kappa r\right)E(0)\\
&=&\exp\left(2(-\lambda t+\frac{1}{s^2}(\cosh\frac{\kappa s}{2}-1)t-\frac{\kappa}{2} r)\right)\sum_A f^2w,\end{eqnarray*} where we used that $\mathrm{supp} f\subset A.$  For fixed $s,t>0$ and $r\geq0,$ choose suitable $\kappa$ such that the following function $$\mathbb{R}^+\ni\kappa\mapsto {\frac{1}{s^2}(\cosh\frac{\kappa s}{2}-1)t-\frac{\kappa}{2} r}$$ attains the minimum. Then by \eqref{e:def eta} and \eqref{eq:eta} we have
$$\sum_{B}|f(t,F)|^2w
\leq e^{2(-\lambda t-\zeta_s(t,r))}\sum_A |f|^2w.$$
That is, for all $f\in \ell^2(A,w),$
$$\sup_{\substack{g\in \ell^2(B,w)\\\|g\|_{\ell^2_w}=1}}|\langle e^{-t\mathcal{L}_i}f,g\rangle|^2=\sum_B|e^{-t\mathcal{L}_i}f|^2w\leq e^{2(-\lambda t-\zeta_s(t,r))}\|f\|_{\ell^2_w}^2.$$ This proves the theorem.
\end{proof}
Using the properties of $\zeta,$ $(10),$ we obtain the following corollary.
\begin{coro}
Let $p_t(F,F')$ be the minimal heat kernel of the simplicial $K$ and $h > 0.$ Then there exists a constant $C(h,s)$ such that for any two subsets $A,B \subset S_{i}(K),t\geq sh\rho(A,B),$
\begin{equation}
|\sum_{F'\in B}\sum_{F\in A}p_t(F,F')w(F)w(F')|\leq \sqrt{w(A)w(B)}\exp\left(-\lambda t\right)\exp\left(-C\frac{\rho^{2}(A,B)}{4t}\right)
\end{equation}
\end{coro}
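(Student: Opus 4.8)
The plan is to obtain the corollary as an immediate consequence of the ``in particular'' inequality in Theorem~\ref{sharps}, converting the sharp exponent $\zeta_s(t,\rho(A,B))$ into a Gaussian factor by means of the lower bound for $\zeta$ recorded in $(10)$.

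Writing $r=\rho(A,B)$, I would start from the already established bound
$$\left|\sum_{F'\in B}\sum_{F\in A}p_t(F,F')w(F)w(F')\right|\leq \sqrt{w(A)w(B)}\,e^{-\lambda t-\zeta_s(t,r)},$$
so that it only remains to estimate $\zeta_s(t,r)$ from below under the hypothesis $t\geq sh\,r$. The key device is the scaling identity $\zeta_s(t,r)=\zeta(t/s^2,\,r/s)$ noted after \eqref{eq:eta}, which reduces the problem to the unscaled function $\zeta$. Setting $t'=t/s^2$ and $r'=r/s$, the hypothesis $t\geq sh\,r$ is precisely $t'\geq h\,r'$, so the second estimate in $(10)$ applies and gives
$$\zeta_s(t,r)=\zeta(t',r')\geq h\arcsin(h^{-1})\frac{(r')^2}{2t'}=h\arcsin(h^{-1})\frac{r^2}{2t}.$$
The powers of $s$ cancel, leaving a constant $C=2h\arcsin(h^{-1})$ depending only on $h$. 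Substituting $-\zeta_s(t,r)\leq -C\,r^2/(4t)$ into the exponential and factoring out $e^{-\lambda t}$ yields the claimed inequality.

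The single point demanding care---which I expect to be a matter of bookkeeping rather than a genuine obstacle---is tracking the scaling so that the threshold $t\geq sh\rho(A,B)$ matches exactly the domain $t'\geq h\,r'$ on which the lower bound in $(10)$ is valid, and verifying that the factors of $s$ in numerator and denominator cancel to produce a constant independent of $s$. Once this is checked, the corollary follows directly from Theorem~\ref{sharps} together with the elementary estimate on $\zeta$; no further analysis is required.
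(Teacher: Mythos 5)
Your proposal is correct and follows essentially the same route as the paper, which obtains the corollary precisely by combining the ``in particular'' inequality of Theorem \ref{sharps} with the lower bound for $\zeta$ in $(10)$, transferred to $\zeta_s$ via the scaling identity $\zeta_s(t,r)=\zeta(t/s^2,r/s)$. Your bookkeeping is accurate: the hypothesis $t\geq sh\rho(A,B)$ translates exactly into the domain $t'\geq hr'$ of $(10)$, and the cancellation of the powers of $s$ indeed yields $C=2h\arcsin(h^{-1})$, which is even slightly stronger than stated since it does not depend on $s$.
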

Noting that for the bounded Hodge Laplacian with $b < \infty,$ from Definition 2.3, the corresponding jump size $s$ is equal to $\frac{1}{(i+1)\sqrt{b}}.$ And we have the following corollary.
\begin{coro}
For a simplicial complex $K$ with the quantity $b< \infty$, the continuous
heat kernel satifies:
$$|p_t(F,F')|\leq  \frac{1}{\sqrt{w(F)w(F')}}e^{-\lambda t-\zeta_{\frac{1}{(i+1)\sqrt{b}}}(t,\rho(F,F'))}.$$
where $\rho$ is the distance defined in Definition (\ref{defi24}).
\end{coro}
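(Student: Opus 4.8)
The plan is to obtain this corollary as an immediate specialization of Theorem~\ref{sharps}. The ``in particular'' bound recorded there,
$$\left|\sum_{F'\in B}\sum_{F\in A}p_t(F,F')w(F)w(F')\right|\leq \sqrt{w(A)w(B)}\,e^{-\lambda t-\zeta_s(t,\rho(A,B))},$$
is valid for any intrinsic metric $\rho$ of jump size $s>0$; hence all that remains is to feed in the particular metric of Definition~\ref{defi24}, to pin down its jump size, and to take $A$ and $B$ to be singletons.

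First I would invoke the verification already carried out after Definition~\ref{defi24}: that $\rho$ is indeed an intrinsic metric, so the hypotheses of Theorem~\ref{sharps} are in force. Next I would compute the jump size $s=\sup\{\rho(F,F')\mid F\sim F'\}$. The key observation is that along any admissible chain every step satisfies $w_{F_jF_{j+1}}\neq 0$, so each weight $\mu(F_j,F_{j+1})$ equals $1$ and $\sum\mu$ is simply the number of steps; thus the infimum over chains equals the combinatorial distance in the neighbor graph, and for a neighboring pair $F\sim F'$ the one-step chain is optimal, giving $\rho(F,F')=\frac{1}{(i+1)\sqrt{b}}$. Since this value is the same for every neighboring pair, the supremum is $s=\frac{1}{(i+1)\sqrt{b}}$, which is positive exactly because $b<\infty$, so Theorem~\ref{sharps} applies.

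Finally I would set $A=\{F\}$ and $B=\{F'\}$, so that $w(A)=w(F)$, $w(B)=w(F')$ and $\rho(A,B)=\rho(F,F')$. The double sum collapses to the single term $p_t(F,F')w(F)w(F')$, and dividing the resulting inequality by $w(F)w(F')$ converts the prefactor $\sqrt{w(F)w(F')}$ into $\frac{1}{\sqrt{w(F)w(F')}}$; inserting $s=\frac{1}{(i+1)\sqrt{b}}$ into $\zeta_s$ then produces exactly the asserted estimate. There is no substantive analytic difficulty, since the real work lives in Lemma~\ref{l:monotonicity Delmotte} and Theorem~\ref{sharps}; the one point deserving care is the jump-size computation, namely checking that no multi-step chain beats the single edge between neighbors, so that $s$ is exactly $\frac{1}{(i+1)\sqrt{b}}$ and the subscript of $\zeta$ in the statement is correct.
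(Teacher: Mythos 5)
Your proposal is correct and follows exactly the route the paper takes: the paper also observes that for the metric of Definition~\ref{defi24} every neighboring pair is at distance exactly $\frac{1}{(i+1)\sqrt{b}}$ (so this is the jump size), and then specializes Theorem~\ref{sharps} to $A=\{F\}$, $B=\{F'\}$ and divides by $w(F)w(F')$. Your extra care about multi-step chains not beating the one-step chain is a valid (if brief) justification of the jump-size claim that the paper simply asserts.
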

When applied to graphs with the normalized Laplacian, the above corollary implies the Davies's heat kernel estimate, [3,$~$Corollary$~$1.2].

\section*{Acknowledgements}
B. H. is supported by NSFC, grant. no. 11401106.

\end{document}